\documentclass{gtpart}   
\usepackage{graphicx,subfig}
\usepackage{enumerate} 
\usepackage[utf8]{inputenc} 
\usepackage[T1]{fontenc}
\usepackage{ae,aecompl}
\usepackage{dsfont} 
\usepackage{pxfonts}
\usepackage{microtype}
\usepackage{braket}

\usepackage{yhmath}

\usepackage{rotating}


\newif\ifPDF
\ifx\pdfoutput\undefined 
        \ifx\pdfversion\undefined
                \PDFfalse
        \fi
\else 
        \ifnum\pdfoutput > 0
                \PDFtrue
        \else
                \PDFfalse
        \fi
\fi

\makeatletter
\newcommand{\xRightarrow}[2][]{\ext@arrow 0359\Rightarrowfill@{#1}{#2}}
\makeatother

%
%
%




\theoremstyle{plain}
				
\newtheorem{theorem}{Theorem}

\newtheorem{lemma}[theorem]{Lemma}

\theoremstyle{definition}

\newtheorem{remark}[theorem]{Remark}




\newcommand{\CBbb}{\mathbb C}

\newcommand{\HBbb}{\mathbb H}

\newcommand{\NBbb}{\mathbb N}

\newcommand{\RBbb}{\mathbb R}



\newcommand{\Fcal}{\mathcal F}

\newcommand{\Mcal}{\mathcal M}

\newcommand{\Xcal}{\mathcal X}





\newcommand{\tec}{Teichm\"uller }
\newcommand{\SL}{\mathsf{SL}}
\newcommand{\PSL}{\mathsf{PSL}}

\newcommand{\SU}{\mathsf{SU}}



\DeclareMathOperator{\Hom}{Hom}

\DeclareMathOperator{\Teich}{Teich}
\DeclareMathOperator{\dev}{dev}




\newcommand{\lra}{\longrightarrow}
\newcommand{\doubleslash}{\bigr/ \negthinspace\negthinspace \bigr/}

\newcommand{\Iso}{\mathsf{Iso}}

\newcommand{\isorightarrow}{\xrightarrow{
   \,\smash{\raisebox{-0.5ex}{\ensuremath{\sim}}}\,}}


\setcounter{tocdepth}{1}


\begin{document}


\title{Surface group  representations to $\SL(2,\CBbb)$ and Higgs bundles with smooth spectral data}

\author{Richard A. Wentworth}

\address{Department of Mathematics,
   University of Maryland,
   College Park, MD 20742, USA}
\email{raw@umd.edu}

\author{Michael Wolf}
\address{Department of Mathematics, Rice University,
Houston, TX 77251}
\email{mwolf@rice.edu}



\begin{abstract} 
We show that for every nonelementary representation of a surface group into $\SL(2,\CBbb)$ there is a Riemann surface structure such that the  Higgs bundle associated to the representation  lies outside the discriminant locus of the Hitchin fibration. 
\end{abstract}



\maketitle
\setcounter{tocdepth}{2}
\thispagestyle{empty}





\section{Introduction}
Let $\Sigma$ be a closed, oriented surface of genus $g\geq 2$. 
In this short note we answer a special case of the following question posed by 
Nigel Hitchin: which representations $\rho: \pi_1(\Sigma)\to \SL(n,\CBbb)$ correspond to Higgs bundles which lie outside the discriminant locus of the Hitchin fibration for some Riemann surface structure on $ \Sigma$?  For example, the Higgs field for a unitary representation (i.e.\ one whose image lies in a conjugate of $\SU(n)$) is identically zero, and a reducible representation (i.e.\ one whose image preserves a proper subspace of $\CBbb^n$ for the standard action) necessarily has a Higgs field whose characteristic polynomial is reducible. As a consequence, these representations always lie in fibers over the discriminant locus for any choice of Riemann surface structure. 
The goal of this paper is to show that for $n=2$ these examples present essentially  the only restrictions.  To state the result, recall that a  representation $\rho:\pi(\Sigma)\to \SL(2,\CBbb)$ is called \emph{elementary} if it is either   unitary, reducible, or maps to the subgroup generated by an embedding
 $$
 \CBbb^\ast\hookrightarrow \SL(2,\CBbb)\ :\ \lambda\mapsto \left(\begin{matrix} \lambda&0\\0&\lambda^{-1}\end{matrix}\right)
 $$
 and the element $\left(\begin{matrix} 0&-1\\ 1&0\end{matrix}\right)$.
  We shall prove the following

\begin{theorem} \label{thm:main}
A  semisimple representation $\rho:\pi_1(\Sigma)\to \SL(2,\CBbb)$ defines a point in the fiber of the Hitchin fibration over the discriminant locus for every Riemann surface structure on $\Sigma$ if and only if $\rho$ is elementary.
\end{theorem}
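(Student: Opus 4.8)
The plan is to pass to the harmonic-map description and then reduce the theorem to a genericity statement about the zeros of a quadratic differential. Fix a Riemann surface structure $X$ on $\Sigma$. Since $\rho$ is semisimple, non-abelian Hodge theory (Corlette--Donaldson) produces a $\rho$-equivariant harmonic map $f_X:\tilde X\to\HBbb^{3}=\SL(2,\CBbb)/\SU(2)$, equivalently a polystable $\SL(2,\CBbb)$-Higgs bundle $(E,\Phi)$ of degree $0$, and the Hitchin map sends $(E,\Phi)$ to $q_X:=\det\Phi\in H^{0}(X,K^{2})$, a constant multiple of the Hopf differential $(f_X^{*}g_{\HBbb^{3}})^{(2,0)}$. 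The spectral curve $\{\lambda^{2}+q_X=0\}$ is smooth and irreducible precisely when $q_X$ has only simple zeros; a repeated zero (in particular $q_X\equiv 0$) places the point over the discriminant. Thus the theorem is equivalent to the assertion that \emph{$\rho$ is nonelementary if and only if $q_X$ has only simple zeros for some $X$.}

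For the easy direction I would dispose of the three elementary types in turn. If $\rho$ is unitary then $\Phi\equiv 0$ and $q_X\equiv 0$ for every $X$. If $\rho$ is reducible and semisimple, then after conjugation $\Phi=\diag(a,-a)$ with $a\in H^{0}(X,K)$, so $q_X=-a^{2}$ is a square and the spectral curve splits. For the dihedral case, let $\pi:\hat\Sigma\to\Sigma$ be the unramified double cover on which $\rho$ becomes diagonal. The lift of $f_X$ is the equivariant harmonic map for $\rho|_{\pi_1(\hat\Sigma)}$, so $q_{\hat X}=\pi^{*}q_X$; but $\rho|_{\pi_1(\hat\Sigma)}$ is reducible, whence $q_{\hat X}$ is a square with zeros of even order. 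Since $\pi$ is unramified it preserves vanishing orders, so every zero of $q_X$ has even order and is therefore non-simple. In each case the conclusion holds for all $X$, giving the ``if'' direction.

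The heart of the matter is the converse, so assume $\rho$ is nonelementary. Let $\Qcal:\Teich(\Sigma)\to\Vcal$ be the section of the (holomorphic) bundle $\Vcal\to\Teich(\Sigma)$ of quadratic differentials defined by $\Qcal(X)=q_X$; it is real-analytic because the harmonic map depends real-analytically on $X$. It suffices to show that the image of $\Qcal$ is \emph{not} contained in the discriminant sub-bundle $\Delta\subset\Vcal$ of differentials with a repeated zero, which is a complex hypersurface, hence of real codimension $2$, in each fibre. My plan is to prove that at a suitable $X_0$ the vertical derivative $D\Qcal_{X_0}:T_{X_0}\Teich(\Sigma)\to H^{0}(X_0,K^{2})$ is an isomorphism. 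Both sides have real dimension $6g-6$, so injectivity suffices; and once $D\Qcal_{X_0}$ is an isomorphism, the inverse function theorem (in a holomorphic local trivialization of $\Vcal$) shows that the image of $\Qcal$ is open, hence meets the dense open set of differentials with only simple zeros, as required. To compute $D\Qcal$ I would linearize the harmonic map equation: a harmonic Beltrami differential $\mu$ produces a variation field $\dot f=L^{-1}(\cdots\mu\cdots)$, where $L$ is the Jacobi operator along $f_{X_0}$; because $\HBbb^{3}$ has negative curvature and $\Sigma$ is closed, $L$ is invertible by a Bochner/maximum-principle argument, which also underlies the real-analytic dependence. One then reads off $\dot q$ as an explicit first-order expression in $\mu$ and $\dot f$.

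The main obstacle is the resulting \textbf{key lemma}: the map $\mu\mapsto\dot q$ has trivial kernel exactly when $f_{X_0}$ is nondegenerate, i.e.\ its image is neither a point nor contained in a geodesic of $\HBbb^{3}$ --- which is precisely the nonelementary condition (a point corresponds to the unitary case, a geodesic to the reducible and dihedral cases). In the Fuchsian/totally geodesic situation this injectivity is classical, being Wolf's harmonic-maps parametrization of Teichm\"uller space; but for a genuine $\SL(2,\CBbb)$ representation the image of $f_{X_0}$ is not contained in a totally geodesic $\HBbb^{2}$, so one must control the full linearized system and tie the (non)vanishing of $\dot q$ to the pointwise rank of $df_{X_0}$, and thence to the three elementary degenerations. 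A subsidiary difficulty is establishing the real-analytic dependence and choosing an $X_0$ at which nondegeneracy can actually be verified. If the global derivative proves recalcitrant, a fallback is a Teichm\"uller-space degeneration --- pinching a simple closed curve on which $\rho$ is hyperbolic and analyzing the harmonic map on the thin part --- to split the zeros of $q_X$ directly.
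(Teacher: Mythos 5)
Your reduction to the harmonic-maps/Hopf-differential statement and your handling of the ``only if'' direction are fine (the paper does the elementary cases even more simply: a non-unitary elementary semisimple $\rho$ fixes a geodesic in $\HBbb^3$, so the Hopf differential is the square of an abelian differential). The converse, however, contains a genuine logical gap \emph{even if one grants your unproven key lemma}. Openness of the image of the trivialized map $F:U\to H^0(X_0,K_{X_0}^2)$, $F(X)=q_X$, does not produce a surface $X$ with $q_X\notin\Delta(X)$: the discriminant is a sub-bundle whose fiber moves with the base point, so what you must rule out is $F(X)\in\Delta'(X)$ for every $X$ (where $\Delta'(X)$ denotes the trivialization-transported discriminant of $X$), not merely $F(X)\in\Delta'(X_0)$. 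A local diffeomorphism can perfectly well thread a moving codimension-two cone: take base coordinates $(X_1,X_2)\in\CBbb^2$, fiber $\CBbb^2$, moving ``discriminant'' $\Delta'(X)=\{(z,w):w=X_1z\}$, and section $F(X_1,X_2)=(X_2,X_1X_2)$; then $\det DF=-X_2\neq 0$ generically, yet $F(X)\in\Delta'(X)$ identically. So an isomorphism statement for $D\Qcal_{X_0}$ proves nothing by itself; you would need transversality of the section $\Qcal$ to the subvariety $\Delta\subset\Vcal$ of the \emph{total space}, a different and harder statement. This is precisely the trap flagged in item (iii) of the Remark after Theorem 2: there are smooth sections of $\Vcal$ (squares of abelian differentials, which in fact arise as $\Qcal$ for reducible $\rho$) lying in $\Delta$ over every point, so no genericity, dimension-count, or inverse-function-theorem argument that ignores how $\Delta(X)$ varies with $X$ can succeed.

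Separately, your key lemma --- bijectivity of $D\Qcal_{X_0}$ whenever $\rho$ is nonelementary --- is not established and is not classical. Wolf's parametrization concerns the opposite configuration (domain fixed, target hyperbolic structure varying, so all Hopf differentials lie in one vector space $Q(X)$); here the target $\HBbb^3$ with its $\rho$-action is fixed, the domain varies, and $D\Qcal$ is not even defined without a choice of connection or trivialization. Moreover, under the identification $Q(X)\cong T^*_X\Teich(\Sigma)$ the section $\Qcal$ is essentially $dE_\rho$, so at critical points your claim amounts to nondegeneracy of the Hessian of the energy; since quasi-Fuchsian manifolds can contain several distinct minimal surfaces (i.e.\ $E_\rho$ can have several critical points), such nondegeneracy for all nonelementary $\rho$ is far from obvious, and nothing in your sketch addresses it. The paper's route is entirely different and sidesteps both issues: by Gallo--Kapovich--Marden, $\rho$ is the holonomy of a complex projective structure; the associated pleated surface yields, after perturbation, a \emph{maximal} measured lamination $\lambda$ with the property that $\vert\rho(\gamma)\vert_{\HBbb^3}\geq A\,\ell_S(\gamma)$ for long curves nearly parallel to $\lambda$; one then proves that the sum $E_{\rho_T}+E_\rho$ of the equivariant energies to the dual tree $T=T_\lambda$ and to $\HBbb^3$ is proper on $\Teich(\Sigma)$, so it attains a minimum, where the two Hopf differentials cancel and the one for $\HBbb^3$ inherits the simple zeros forced by the trivalence of $T$. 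Your fallback (pinching a curve and analyzing the thin part) is likewise only a hope, not an argument.
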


The natural approach to the above statement is to prove that if  $\rho$ is nonelementary, one can find a Riemann surface structure $X$ on $\Sigma$ so that the Higgs bundle on $X$ corresponding to $\rho$ defines a point in the fiber of the Hitchin fibration away from the discriminant locus for $X$.  We shall prove this  by combining the powerful result of Gallo-Kapovich-Marden \cite{gallo-kapovich-marden} with the method of harmonic maps to trees \cite{wolf:trees}, \cite{wolf:mfs-harmonic}.  

Let us first review a bit of the background and terminology for this problem. Let
\begin{equation} \label{eqn:character}
\Xcal(\Sigma)=\Hom(\pi_1(\Sigma), \SL(2,\CBbb))\doubleslash\SL(2,\CBbb)
\end{equation}
denote the $\SL(2,\CBbb)$-character variety of $\Sigma$ parametrizing semisimple representations (see \cite{CullerShalen:1983, LubotzkyMagid:1985}).  For a (marked) Riemann surface structure $X$ on $\Sigma$, let $\Mcal(X)$ denote the moduli space of rank $2$ Higgs bundles on $X$ with fixed trivial determinant (see \cite{Hitchin:87}). The nonabelian Hodge theorem  asserts the existence of a homeomorphism $\Xcal(\Sigma)\simeq \Mcal(X)$ for each $X$.  One direction of the  homeomorphism is a consequence of the following result of Corlette and Donaldson \cite{Corlette:88, Donaldson:87}: given a semisimple representation $\rho:\pi_1(\Sigma)\to \SL(2,\CBbb)$
and a Fuchsian representation $\sigma: \pi_1(\Sigma)\isorightarrow\Gamma\subset \PSL(2,\RBbb)$, $X=\Gamma\backslash \HBbb^2$, 
 there exists a smooth harmonic  map $v:\HBbb^2\to \HBbb^3$ that is equivariant for the action of $\pi_1(\Sigma)$ via $\sigma$ on the upper half plane $\HBbb^2\subset\CBbb$ and $\rho$ on the hyperbolic $3$-space
  $\HBbb^3$,  on which $\SL(2,\CBbb)$ acts by isometries. Moreover,  $v$ minimizes the energy among all such equivariant maps.
  We shall refer to $v$ as an \emph{equivariant harmonic map}. 
  If $Q(X)$ denotes the space of holomorphic quadratic differentials on $X$, then there is a (singular) holomorphic fibration $h:\Mcal(X)\to Q(X)$ which is a smooth fibration of abelian varieties over the locus of nonzero differentials with simple zeros. 
   The image by $h$ of a Higgs bundle corresponding to  a semisimple representation  is simply the Hopf differential of any equivariant harmonic map, as described  above. 
  The divisor  $\Delta(X)\subset Q(X)$ consisting  of those quadratic differentials having some zero with multiplicity is called the \emph{discriminant locus}.
  Points in $\Mcal(X)$ in  the fiber over  $q\in Q(X)\setminus\Delta(X)$  correspond to certain line bundles on a branched double cover of $X$ called the \emph{spectral curve}. The line bundle and the spectral curve together  form the \emph{spectral data}, which completely determine the Higgs bundle, and hence via the other direction of the nonabelian Hodge theorem,  the corresponding representation $\rho$. The spectral data for points in $\Mcal(X)$ lying over the discriminant locus are more difficult to describe; hence, the interest in the question posed by 
  Hitchin. For more on this structure, see \cite{Hitchin:1987b}.
  
   With this understood,  Theorem \ref{thm:main} is a direct consequence of the following equivalent statement.
 
 \begin{theorem} \label{thm:harmonic}
 Let $\rho:\pi_1(\Sigma)\to \SL(2,\CBbb)$ be a semisimple representation. Then there exists a Riemann surface structure $X=\Gamma\backslash\HBbb^2$ on $\Sigma$ such that the Hopf differential of the $\rho$-equivariant harmonic map $\HBbb^2\to \HBbb^3$ has only simple zeros
 if and only if $\rho$ is nonelementary.
 \end{theorem}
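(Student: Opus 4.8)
The plan is to prove the two implications of the equivalence separately. For \emph{elementary} $\rho$ I will show that the Hopf differential of the equivariant harmonic map fails to have only simple zeros for \emph{every} Riemann surface structure, while for \emph{nonelementary} $\rho$ I will exhibit a single $X$ on which it does.

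\textbf{The elementary direction.} The key point is that, by convexity of the energy and uniqueness of the equivariant energy minimizer \cite{Corlette:88,Donaldson:87}, the harmonic map $v:\HBbb^2\to\HBbb^3$ factors through the smallest nonempty $\rho$--invariant closed convex subset of $\HBbb^3$; write $\Phi\in Q(X)$ for its Hopf differential. If $\rho$ is unitary it fixes a point, so $v$ is constant and $\Phi\equiv 0$. If $\rho$ is (semisimple) reducible it is conjugate into the diagonal torus, whose invariant convex set is a geodesic $\gamma\cong\RBbb$; then $v$ factors through $\gamma$, its Hopf differential is the square $\omega^2$ of the abelian differential $\omega=\partial v$, and since $\omega$ has $2g-2>0$ zeros, $\Phi$ has only zeros of even order. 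In the third (dihedral) case $\rho$ lands in the group generated by the diagonal torus and the Weyl element; this group still preserves $\gamma$, so $v$ factors through $\gamma$, and on the unramified double cover $\pi:\hat X\to X$ on which $\rho$ becomes diagonal one has $\pi^{\ast}\Phi=\hat\omega^{2}$ for an abelian differential $\hat\omega$ on $\hat X$. Hence $\pi^{\ast}\Phi$, and therefore $\Phi$ itself (the cover being unramified), has only even--order zeros. In every case $\Phi$ is zero or has a zero of multiplicity, i.e.\ lies in $\Delta(X)$, for all $X$.

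\textbf{The nonelementary direction.} Fix a nonelementary $\rho$. Since $\rho$ takes values in $\SL(2,\CBbb)$, its projection to $\PSL(2,\CBbb)$ is nonelementary and lifts (by $\rho$ itself), so by Gallo--Kapovich--Marden \cite{gallo-kapovich-marden} it is the holonomy of a complex projective structure; the developing map to $\CBbb\mathrm{P}^{1}=\partial_{\infty}\HBbb^{3}$ and the associated measured lamination supply a degeneration direction compatible with $\rho$. I then degenerate the domain: along a Teichm\"uller ray whose vertical foliation is filling and is defined by a quadratic differential $q$ with only simple zeros, the energies of the maps $v_{X_t}$ blow up, and by the method of harmonic maps to $\RBbb$--trees \cite{wolf:trees,wolf:mfs-harmonic} the suitably rescaled maps converge to a nonconstant equivariant harmonic map to an $\RBbb$--tree, with Hopf differentials converging, after normalization, to $q$; that is, $\Phi(X_t)/\|\Phi(X_t)\|\to q$ locally uniformly. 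Because having only simple zeros is an open condition stable under such convergence---the zeros vary continuously and remain separated---the differential $\Phi(X_t)$ has only simple zeros for all large $t$, and any such $X=X_t$ is the required surface.

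\textbf{The main obstacle.} The crux is to guarantee, for \emph{every} nonelementary $\rho$, that the rescaled tree limit is genuinely branched and that its Hopf differential is the chosen simple--zero differential rather than some degenerate neighbor. This is exactly where Gallo--Kapovich--Marden is essential: it certifies that $\rho$ has no global fixed point and no invariant geodesic or end, so the limiting $\RBbb$--tree action is nondegenerate and the limiting Hopf differential is nonzero, while the freedom in the choice of projective structure (in particular its branching) is what lets one align the degeneration with a foliation having only trivalent, hence simple--zero, singularities. Controlling this coupling---between the fixed target geometry of $\rho$ and the domain degeneration that produces the tree, uniformly along the degenerating family---is the technical heart of the argument, and is the step I expect to require the most care.
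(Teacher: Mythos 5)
Your elementary direction is essentially the paper's Remark (1), carried out in a bit more detail (including the dihedral case via the unramified double cover), and it is correct. The gap is in the nonelementary direction, and it is genuine: both analytic claims on which your degeneration argument rests fail for general nonelementary $\rho$. First, it is simply not true that $E_\rho(X_t)\to\infty$ along a Teichm\"uller ray merely because its vertical foliation is filling with simple zeros. Take $\rho$ to be the holonomy of the fiber of a closed hyperbolic mapping torus of a pseudo-Anosov $\phi$ (this is nonelementary, and it lifts to $\SL(2,\CBbb)$ by Culler's lifting theorem for hyperbolic $3$-manifold holonomies). Since $\rho\circ\phi_*$ is conjugate to $\rho$, the function $E_\rho$ on $\Teich(\Sigma)$ is invariant under the action of $\phi$, hence bounded on the Teichm\"uller axis of $\phi$; that axis diverges toward the attracting foliation of $\phi$, which is filling, uniquely ergodic, and for generic $\phi$ lies in the principal stratum (only simple zeros). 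So along exactly the kind of ray you propose, the energy stays bounded. This degenerate behavior is precisely why the paper does not work with $E_\rho$ alone: the role of its Lemma \ref{lem: pleating locus} (the pleating-lamination estimate extracted from the Gallo--Kapovich--Marden projective structure) is to build a trivalent tree $T=T_\lambda$ adapted to $\rho$ so that the \emph{sum} $E_{\rho_T}+E_\rho$ is proper, the tree energy blowing up in exactly those directions where the $\HBbb^3$-energy may fail to.

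Second, even along a ray where the energy does blow up, your assertion that $\Phi(X_t)/\Vert\Phi(X_t)\Vert\to q$ is unsupported and in general false. When $E(v_{X_t})\to\infty$, the rescaled maps subconverge to an equivariant harmonic map into an $\RBbb$-tree arising as a rescaled limit of $\HBbb^3$ with its $\rho$-action; the limit of the normalized Hopf differentials is the Hopf differential of \emph{that} map, and it is dictated by the asymptotic geometry of $\rho$, not by the direction $q$ you chose for the domain degeneration. Nonelementarity gives no control on the zero structure of this limit (and note that ``no fixed point and no invariant geodesic'' is just the definition of nonelementary; the content of Gallo--Kapovich--Marden is the existence of a projective structure, whose pleating lamination is what the paper actually uses, and which need not be filling or have simple zeros). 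So the openness argument at the end of your proof has nothing to attach to. The paper avoids both problems by being variational rather than asymptotic: once $E_{\rho_T}+E_\rho$ is proper and differentiable, it has an interior minimum $X$, where the Hopf differential of the product harmonic map to $T\times\HBbb^3$ vanishes, so $\Phi_v=-\Phi_u$, and $\Phi_u$ has simple zeros because $T$ is trivalent (Lemma \ref{lem: sufficiency of energy properness}). Any rescue of your approach would still require a quantitative coupling between $\rho$ and the chosen degeneration -- lower bounds on $\vert\rho(\gamma)\vert_{\HBbb^3}$ for the curves that control the degeneration -- which is exactly what the pleating-locus lemma supplies and your proposal lacks.
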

 
 \begin{remark}
 \begin{enumerate}
 \item 
A unitary representation fixes a point in $\HBbb^3$, and so  the constant map is equivariant and clearly energy minimizing.
Hence,  the Hopf differential vanishes. A semisimple elementary representation that is not unitary fixes a geodesic in $\HBbb^3$, which  then necessarily coincides with the  image of any equivariant harmonic map.   The Hopf differential is therefore the square of an abelian differential. In particular, since we assume $g\geq 2$, the differential has zeros with multiplicity.
 Therefore, the ``only if'' parts of Theorems \ref{thm:main} and \ref{thm:harmonic} are clear. 
 \item We shall actually prove a slightly stronger statement; namely, for nonelementary representations we can find a Riemann surface structure such that the vertical foliation of the Hopf differential has no saddle connections.
 \item 
Note that there are obviously sections of the bundle of holomorphic quadratic differentials  over Teichm\"uller space $\Teich(\Sigma)$  which at every point  have  zeros with multiplicity; one class of examples are the squares of abelian differentials just mentioned. Hence, Theorem \ref{thm:harmonic} does not seem to follow from a  simple  dimension count. 
\item As pointed out by Hitchin, there will be other obstructions in any generalization of Theorem \ref{thm:main} for $n\geq 3$. In particular, some of these will come from  other real forms of $\SL(n,\CBbb)$.  Representations to $\SU(p,q)$, $p\neq q$, for example, will always lie in the discriminant locus (cf. \cite{Schaposnik:Thesis}).
 Finding a suitable replacement  in higher rank for the result of Gallo-Kapovich-Marden  remains a challenge.
 
 \item Theorem~2 states that if a nonelementary representation $\rho$ is in the discriminant locus relative to one Riemann surface $X$, there is another Riemann surface $Y$ for which $\rho$ is not in the discriminant locus relative to $Y$. Neither the statement nor the proof suggests any conclusion about the frontier of the closure of these discriminant loci in any of the natural compactifications of the moduli space $\Xcal(\Sigma)$ (cf.\ \cite{DaskalDostoglouWentworth:00}).

\end{enumerate}
\end{remark}

We briefly outline the strategy for proving Theorem~2. 
The \emph{Hopf differential} $\Phi$ of a locally defined harmonic map $w: \Omega \to (N,d)$ from a domain $\Omega \subset \CBbb$ to metric space $(N,d)$ is defined to be $\Phi = 4 (u^* d_T)^{2,0}$. In the case of an equivariant harmonic map $v:\HBbb^2\to \HBbb^3$, this differential $\Phi$ descends to a holomorphic quadratic differential on the Riemann surface $X = \Gamma\backslash\HBbb^2$, 
which for convenience we continue to denote by $\Phi$.
  Projecting in $\HBbb^2$ along the leaf space of $\Phi$ yields a $\pi_1(\Sigma)$-equivariant harmonic map from 
$\HBbb^2$ to a metric tree $T_{\Phi}$, called the \emph{dual tree to $\Phi$}.  There are two relevant observations: first, if the vertices of $T_{\Phi}$ all have  valence three, then we recognize that $\Phi$ must have had only simple zeroes.  Second, we notice that the $\pi_1(\Sigma)$-equivariant product map $\HBbb^2 \to \HBbb^3 \times T_{\Phi}$ is  a \emph{conformal harmonic map}, i.e.\ it is both harmonic and has vanishing Hopf differential.

The idea then is to reverse this construction: we seek a tree $T$ all of whose vertices are trivalent and a $\pi_1(\Sigma)$-equivariant 
 conformal harmonic map  $\HBbb^2\to\HBbb^3 \times T$. In that case, the resulting Hopf differential for the harmonic map to $\HBbb^3$ will be the negative of the Hopf differential for projection to the tree, which necessarily has only simple zeroes.

If the function on \tec space which records, for each domain Riemann surface, the equivariant energy of the harmonic map to $\HBbb^3 \times T$ is proper, then there exists a conformal harmonic map.
Now given a representation $\rho: \pi_1(\Sigma)\to \SL(2,\CBbb)$,  then unless $\rho$ is quasi-Fuchsian one expects there to be certain divergent sequences in \tec space along which the energy of the equivariant harmonic map $\HBbb^2\to \HBbb^3$ is uniformly bounded, while for other divergent sequences the energy tends to $+\infty$. A similar statement holds for harmonic maps to trees.
 Therefore, the challenge is  to associate a tree $T$ to  a given $\rho$ such that the sum of the  energies to $T$ and $\HBbb^3$ diverges along every choice of proper path in \tec space. We are rescued in this quest by the main result of \cite{gallo-kapovich-marden}, which by realizing $\SL(2,\CBbb)$-representations of surface groups as holonomies of complex projective structures, provides a measured lamination on the surface with image of at least moderate length (quotient) length in $\HBbb^3$: that measured lamination can be adjusted so that its dual tree suffices for our needs. 

\medskip
\textbf{Acknowledgements.} 
The authors warmly thank Nigel Hitchin for introducing us to the problem, and Shinpei Baba for discussions about projective structures which led to the formulation of Lemma \ref{lem: pleating locus}. We are also appreciate the helpful comments of the referees.
 The authors were supported by National Science Foundation grants DMS-1406513 and DMS-1007383, respectively. Both authors are grateful to the Mathematical Sciences Research Institute at Berkeley, where some of this research was conducted, to the Institute for Mathematical Sciences at the National University of Singapore where the work was begun and to NSF grants DMS-1107452, 1107263, 1107367 ``RNMS: GEometric structures And Representation varieties"(the GEAR Network) which supported collaborative travel for the authors. \\

\section{Trees, measured foliations,  and harmonic maps} \label{sec:trees}
In this section, we prove a lemma that motivates the strategy of the proof of Theorem~\ref{thm:harmonic}. The basic constructions in the statement of the lemma below were first exploited in \cite{wolf:mfs-harmonic}.
Namely, we will find the desired Riemann surface structure   as a critical point for an energy function on \tec space. 
To define this energy function, first choose a measured foliation, say $(\Fcal, \lambda)$ on the differentiable surface $\Sigma$, lift that measured foliation to a $\pi_1(\Sigma)$-equivariant measured foliation on the universal cover 
$\widetilde{\Sigma}$, and then project the transverse measure $\lambda$ along the leaves to obtain an $\RBbb$-tree $T=T_{\lambda}$ with an isometric action (relative to the metric defined by the projected measure) of $\pi_1(\Sigma)$. 
For concreteness, we will express  the isometric action of the fundamental group on $T$ by a representation $\rho_T: \pi_1(\Sigma)\to {\mathsf{Iso}}(T)$.
 For any $\gamma\in \pi_1(\Sigma)$ 
whose free homotopy class is represented by a simple closed curve, the intersection $i(\gamma, \lambda)$ with the foliation is equal to the translation length $\gamma$ as it acts  on $T$:
\begin{equation} \label{eqn:intersection-translation}
i(\gamma, \lambda)=\vert\rho_T(\gamma)\vert_T:=\min_{x\in T} d_T(x,\gamma x)\ .
\end{equation}
Recall that the  action of an isometry on an $\RBbb$-tree is  always semisimple (cf.\ \cite{CullerMorgan:1987}); hence the ``min'' instead of an ``inf''  in \eqref{eqn:intersection-translation}.

We focus initially on two features of this construction.
First, given 
a Fuchsian representation $\sigma: \pi_1(\Sigma)\isorightarrow\Gamma\subset \PSL(2,\RBbb)$, a Riemann surface $X=\Gamma\backslash\HBbb^2$, 
and an $\RBbb$-tree $T$ with an isometric action $\rho_T:\pi_1(\Sigma)\to \Iso(T)$,
a map $u:\HBbb^2\to T$ is called \emph{$\pi_1(\Sigma)$-equivariant} if $u(\sigma(\gamma) z)=\rho_T(\gamma)u(z)$ for all $\gamma\in \pi_1(\Sigma)$ and all $z\in \HBbb^2$ (when the Riemann surface structure is assumed, we sometimes say that $u$ is \emph{$\rho_T$-equivariant} to emphasize the action on the target).
We define the $\rho_T$-energy $E_{\rho_T}(X)$ of $X$ to be the infimum of  the energies of locally finite energy $\pi_1(\Sigma)$-equivariant  maps $\HBbb^2 \to T$ (see \cite{wolf:trees} for the case of maps to $\RBbb$-trees and  \cite{KorevaarSchoen:1993, Jost:1994} for the general setting of nonpositively curved metric space targets). 
 Here we note that the energy density for such maps is a locally integrable form on $\HBbb^2$ that is invariant with respect to the action of $\pi_1(\Sigma)$ via $\sigma$.
It therefore descends to $X$, and its integral gives a well defined (finite) energy.
  Moreover, any energy minimizer (or \emph{harmonic map}) $u:\HBbb^2\to T$ has the following property:
  \begin{itemize}
  \item
   there is a nonzero holomorphic quadratic (Hopf) differential $\Phi\in Q(X)$
   whose vertical measured foliation (on $\HBbb^2$) defines a metric tree $T_\Phi$ with an isometric action of $\pi_1(\Sigma)$; 
   \item there is a $\pi_1(\Sigma)$-equivariant map $\psi:T_\Phi\to T$ which is a \emph{folding}; in case $T=T_\lambda$ is dual to a measured foliation (the only case we will consider here), then $\psi$ is an isometry;
   \item finally, $u=\psi\circ \pi$, where $\pi: \HBbb^2\to T_\Phi$ is the projection onto the vertical leaf space of $\Phi$;
  \end{itemize}
  (see \cite{hubbard-masur:foliations, wolf:trees, wolf:hubbard-masur, DaskalDostoglouWentworth:00}). 
  Moreover, the energy of $u$ is given by 
  \begin{equation} \label{eqn:energy}
  E_{\rho_T}(X):=E(u)=2\int_X \vert \Phi\vert\ .
  \end{equation}
  The energy only depends on the marked isomorphism class of $X$. Hence, $E_{\rho_T}(X)$ is a well-defined function  $E_{\rho_T}:\Teich(\Sigma)\to \RBbb_{\geq 0}$.

Second, some features of the (Hopf) quadratic differential $\Phi$ are reflected in the tree: in particular, if each vertex of the tree has valence three, then  $\Phi$ can have only simple zeros, as any higher order zeros  -- or indeed any collection of zeros connected by subarcs of a leaf -- would create higher order branching of the leaf space, which is the tree $T=T_\Phi$ in this setting. As it is a generic condition that the zeros of a holomorphic quadratic differential should be simple with no connecting leaves between them, we see that the generic tree dual to a measured foliation should have all vertices of valence three.

 The hyperbolic $3$-ball $\HBbb^3=\SL(2,\CBbb)/\SU(2)$ has a left action of $\SL(2,\CBbb)$ by isometries. 
 Fix a semisimple representation $\rho: \pi_1(\Sigma)\to \SL(2,\CBbb)$. Then $\HBbb^3$ inherits a left action of $\pi_1(\Sigma)$ by isometries.
 Given a Riemann surface structure $X=\Gamma\backslash\HBbb^2$ on $\Sigma$,  the theorem of Corlette-Donaldson mentioned in the introduction asserts the existence of  a harmonic map $v:\HBbb^2\to \HBbb^3$ that is equivariant with respect to $\rho$; this map is unique if and only if $\rho$ is irreducible. 
  Thus, in analogy with what we did with the target tree $T$ in defining the $\rho_T$-energy, we may define  the $\rho$-energy $E_\rho(X)$ of a Riemann surface  to be the 
energy $E(v)$ of $v$. As before, the function $E_\rho$ is well-defined on the \tec space $\Teich(\Sigma)$.

Finally, consider the nonpositively curved metric space $N=T\times \HBbb^3$ with product metric $d_N$ and the diagonal isometric action 
 $\pi_1(\Sigma)\to \Iso(N)$ given by  $\rho_N(\gamma)=(\rho_T(\gamma), \rho(\gamma))$.
 The energy of equivariant maps $\HBbb^2\to N$ is simply the sum of the energies of the maps to $T$ and $\HBbb^3$.
This defines our setting well enough to state 

\begin{lemma} \label{lem: sufficiency of energy properness}
Let $T=T_\lambda$ be a tree which is both dual to a measured foliation on the surface $\Sigma$ and has all vertices of valence three, and let $\rho:\pi_1(\Sigma)\to \SL(2,\CBbb)$ be irreducible.   Suppose that the function $E_{\rho_N}=E_{\rho_T}+E_\rho$ is proper on $\Teich(\Sigma)$.	Then there exists a Riemann surface structure on $\Sigma$ such that the Hopf differential  of the $\rho$-equivariant harmonic map $\HBbb^2 \to \HBbb^3$ has only simple zeros.
\end{lemma}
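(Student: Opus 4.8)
The plan is to realize the desired Riemann surface as a minimum of the combined energy $E_{\rho_N}$ and then to read off the simplicity of the zeros from the trivalence of $T$ at that minimum. First I would use the properness hypothesis: since $\Teich(\Sigma)$ is a smooth manifold without boundary and $E_{\rho_N}=E_{\rho_T}+E_\rho$ is continuous, nonnegative, and proper, its sublevel sets are compact and it attains its infimum at some $X_0\in\Teich(\Sigma)$. Let $w=(u,v):\HBbb^2\to N=T\times\HBbb^3$ be the $\rho_N$-equivariant harmonic map realizing $E_{\rho_N}(X_0)$. Because $N$ is a product and the action $\rho_N$ is diagonal, the energy splits as $E(w)=E(u)+E(v)$, and the two components can be varied independently subject to their (independent) equivariance constraints; hence each is separately harmonic and separately energy-minimizing for its factor. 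In particular $v$ is the $\rho$-equivariant harmonic map $\HBbb^2\to\HBbb^3$ associated to $X_0$, unique since $\rho$ is irreducible.

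Next I would invoke the first-variation formula for the energy as a function on \tec space. For the smooth target $\HBbb^3$, the differential of $X\mapsto E_\rho(X)$ at $X$ is, up to a universal constant, the pairing of cotangent vectors in $Q(X)$ with the Hopf differential $\Phi_{\HBbb^3}$ of $v$; the analogous statement for the tree-valued harmonic map $u$, that $dE_{\rho_T}$ is represented by the Hopf differential $\Phi_T$, is precisely the content of the harmonic-map-to-trees theory recalled before the statement (cf.\ \cite{wolf:trees, KorevaarSchoen:1993}). Since Hopf differentials are additive over a product target, the differential of $E_{\rho_N}$ at $X_0$ is represented by $\Phi_N=\Phi_T+\Phi_{\HBbb^3}$. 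As $X_0$ is a minimum, this differential vanishes, giving
\[
\Phi_{\HBbb^3}=-\Phi_T,
\]
so that $w$ is a conformal harmonic map.

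Finally I would analyze $\Phi_T$. By the structure theorem recalled above, the vertical measured foliation of $\Phi_T$ determines a tree $T_{\Phi_T}$ together with an equivariant folding $\psi:T_{\Phi_T}\to T$, and because $T=T_\lambda$ is dual to a measured foliation, $\psi$ is an isometry. An isometry preserves the branching structure, so every vertex of $T_{\Phi_T}$ has valence three; since a zero of $\Phi_T$ of order $k$ produces a vertex of valence $k+2$ in the leaf space, and any saddle connection would merge zeros into a vertex of still higher valence, trivalence forces all zeros of $\Phi_T$ to be simple and forbids saddle connections. By the displayed identity, the Hopf differential $\Phi_{\HBbb^3}=-\Phi_T$ of the $\rho$-equivariant harmonic map at $X_0$ therefore has only simple zeros, which is the assertion (and in fact yields the slightly stronger no-saddle-connection conclusion mentioned in the remark).

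The main obstacle is the first-variation step for the singular factor: one must know that $E_{\rho_T}$, and hence $E_{\rho_N}$, is differentiable at the minimizer and that its derivative is genuinely represented by the Hopf differential of the tree-valued map, despite the non-Riemannian target. This relies on the Korevaar--Schoen theory together with the fine leaf-space structure of harmonic maps to $\RBbb$-trees, whereas the Riemannian factor $\HBbb^3$ is classical by comparison. One should also confirm that the infimum is genuinely attained at an interior point, which is automatic here since $\Teich(\Sigma)$ has no boundary and properness prevents a minimizing sequence from escaping to infinity.
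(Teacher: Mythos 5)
Your proposal is correct and follows essentially the same route as the paper's proof: properness plus differentiability of the energy yields a minimum where the derivative vanishes, the first-variation formula (the gradient being represented by the Hopf differential of the $\rho_N$-equivariant map, which the paper cites to \cite[Theorem 1.2]{Wentworth:07}) gives $\Phi_{\HBbb^3}=-\Phi_T$, and the equivariant isometry between $T_{\Phi_T}$ and the trivalent tree $T$ forces simple zeros. You also correctly identified the genuine technical crux --- differentiability of the tree-factor energy and the validity of the Hopf-differential representation of its derivative for the singular target --- which is exactly the point the paper disposes of by citation rather than argument.
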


\begin{remark}
By our comments above on the generic nature of such trees, we see that the first sentence is not a vacuous condition. 	
\end{remark}

\begin{proof} 
By a classical result 
(see \cite{schoen-yau:incompressible, SacksUhlenbeck:82}, and for the  case of general nonpositively curved metric  target spaces, \cite[Corollary 1.3]{Wentworth:07}), the energy function $E_{\rho_T} + E_{\rho}: \Teich(\Sigma) \to \RBbb$ is differentiable on  $\Teich(\Sigma)$, 
and so, being proper, achieves its minimum at a point $X=\Gamma\backslash\HBbb^2$; moreover, the gradient of that energy function vanishes at $X$. On the other hand, the classical expression for the gradient as a multiple of the Hopf differential of the $\rho_N$-equivariant harmonic map from $\HBbb^2$ to $T \times \HBbb^3$ holds in this case (see \cite[Theorem 1.2]{Wentworth:07}), and so the Hopf differential of that harmonic map vanishes. Because the target metric is a product, we may express the harmonic map $f: \HBbb^2 \to T \times \HBbb^3$ as a product $f=(u, v)$, where
$u$ is the unique $\rho_T$-equivariant harmonic map $ \HBbb^2 \to T$, and $v$ is the unique $\rho$-equivariant harmonic map $ \HBbb^2 \to \HBbb^3$.  The Hopf differential of  $f$ is the sum of the Hopf differentials $\Phi_u$ and $\Phi_v$ of $u$ and $v$, respectively; and since it vanishes, we have $\Phi_v=-\Phi_u$.
However, as explained in the opening of this section, 
the vertical measured foliation of  $\Phi_u$  has leaf space which projects to a tree $T_{\Phi_u}$ that is equivariantly isometric to $T$. In particular, since $T$ has all vertices of valence three,  the differential $\Phi_u$ has simple zeros.  The same is therefore true of  $\Phi_v=-\Phi_u$.
\end{proof}

\section{Complex projective structures and bending laminations} \label{sec:projective}

Let us introduce some more notation. 
 For a hyperbolic surface $S$ and simple closed curve $\gamma\subset S$, let $\ell_S(\gamma)$ denote the length of the geodesic in the free homotopy class of $\gamma$ as measured on $S$.
For $g\in \Iso(\HBbb^3)$, define the translation length $\vert g\vert_{\HBbb^3}$ as in eq.\ \eqref{eqn:intersection-translation}:
\begin{equation*} \label{eqn:translation}
\vert g\vert_{\HBbb^3} := \inf_{x\in \HBbb^3} d_{\HBbb^3}(g\cdot x, x) \ .
\end{equation*}

The goal now is to find a tree  for which the hypotheses of Lemma~\ref{lem: sufficiency of energy properness} are satisfied.
To that end, let $\rho:\pi_1(\Sigma)\to \SL(2,\CBbb)$ be nonelementary. The foundational result in \cite{gallo-kapovich-marden} 
implies that $\rho$ is the holonomy of a complex projective structure, say $(X,\wp)$, and hence is the holonomy of a developing map $\dev_\rho:\widetilde{\Sigma} \to \CBbb P^1$. (The reader may find it useful to keep in mind that this complex projective structure is not necessarily unique, and in general, the developing map, while a local homeomorphism, is neither necessarily injective nor a covering.)
 We exploit the rich synthetic hyperbolic geometry of complex projective structures in the following lemma; in that setting, because of hyperbolic geometric constructions, it is more convenient to replace measured foliations with measured laminations in the discussion.  As the natural homeomorphism between the space of measured foliations and measured laminations respects the passage to dual trees, there is no loss of content in this change of perspective. 
 A \emph{maximal lamination} is a 
measured lamination all of whose complementary regions are ideal 
triangles; or more background on properties of geodesic laminations used below, see \cite{Bonahon:1986}.

\begin{lemma} \label{lem: pleating locus}
Let $(X, \wp)$ be a complex projective structure on $\Sigma$ with holonomy $\rho$. Then there is a hyperbolic structure $S$ on $\Sigma$, a maximal 
measured geodesic lamination $\lambda$ on $S$, 
and constants $\varepsilon_1, A>0$, depending only on $(S, \lambda)$, such that the following hold:
\begin{enumerate}
\item if $\gamma$ is a simple closed curve on $\Sigma$ with intersection number 
$i(\gamma, \lambda) < \varepsilon_1$, then $\vert\rho(\gamma)\vert_{\HBbb^3}\geq A\ell_S(\gamma)$;
\item more generally, for any constant $I > 0$, there is  $L>0$ so 
that if $\gamma$ is a simple closed  curve on $\Sigma$ with 
$i(\gamma, \lambda) < I$ and $\ell_S(\gamma) > L$, then $\vert\rho(\gamma)\vert_{\HBbb^3}\geq A\ell_S(\gamma)$.
\end{enumerate}
\end{lemma}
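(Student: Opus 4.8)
The plan is to realize $\rho$ by a pleated surface in $\HBbb^3$ and to extract the translation lengths $|\rho(\gamma)|_{\HBbb^3}$ from the bending of that surface. Since $\rho$ is nonelementary, the theorem of Gallo--Kapovich--Marden \cite{gallo-kapovich-marden} gives the projective structure $(X,\wp)$ with holonomy $\rho$, and to it Thurston's parametrization (in the form of Kulkarni--Pinkall and Epstein--Marden) attaches a hyperbolic structure $S$, a measured geodesic lamination $\lambda_0$ (the bending lamination), and a $\rho$-equivariant \emph{bending map} $\beta:\HBbb^2\to\HBbb^3$ which restricts to an isometry on each complementary plaque of $\widetilde{\lambda_0}$ (with the $S$-metric) and which rotates successive plaques about their common boundary leaf through the transverse measure of $\lambda_0$. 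First I would complete $\lambda_0$ to a \emph{maximal} lamination $\lambda$ by adjoining finitely many leaves carrying no transverse measure; this alters neither $\beta$ nor $\rho$, keeps $i(\gamma,\lambda)=i(\gamma,\lambda_0)$, and, by cutting the complementary regions into ideal triangles, is what eventually makes the dual tree trivalent in Lemma~\ref{lem: sufficiency of energy properness}. I would also arrange (perturbing the structure if necessary, using the flexibility in \cite{gallo-kapovich-marden}) that no atom of the transverse measure of $\lambda$ is congruent to $\pi$ modulo $2\pi$, and set $\theta_0<\pi$ to be the resulting supremum of exterior bending angles.

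The heart of the matter is a quasi-geodesic estimate. Let $\gamma$ be a simple closed curve and $\widetilde\gamma\subset\HBbb^2$ the axis of its $S$-geodesic representative. Because $\beta$ is equivariant and plaquewise isometric, $P:=\beta(\widetilde\gamma)$ is a $\rho(\gamma)$-invariant bi-infinite broken geodesic whose length over one period equals $\ell_S(\gamma)$ and whose total turning (total variation of the tangent direction) over one period is at most $i(\gamma,\lambda)$, each individual exterior angle being at most $\theta_0<\pi$. I would then read off $|\rho(\gamma)|_{\HBbb^3}$ from $P$ viewed as a quasi-axis, invoking two standard facts of hyperbolic geometry: a broken geodesic whose exterior angles are bounded by $\theta_0<\pi$ and whose turning is not too concentrated is an unparametrized quasi-geodesic, so its endpoints are the fixed points of $\rho(\gamma)$ and $\rho(\gamma)$ is loxodromic; and, quantitatively, a single bend of angle $\theta\le\theta_0$ between two long geodesic segments diminishes the distance between their far endpoints by at most a constant $D(\theta_0)$ (from the hyperbolic law of cosines). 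A fold, i.e.\ an exterior angle reaching $\pi$, is exactly what would let $P$ reverse and collapse the translation length, which is why the preparation of the previous paragraph is needed.

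Both assertions then follow by balancing $\ell_S(\gamma)$ against the defect accumulated in bending. For (i) I would take $\varepsilon_1$ (and a companion $A<1$) so small, in terms of $(S,\lambda)$, that $i(\gamma,\lambda)<\varepsilon_1$ forces the total turning of $P$ over a period to be a small fraction of $\ell_S(\gamma)\ge\operatorname{sys}(S)$; then $P$ is nearly geodesic and $|\rho(\gamma)|_{\HBbb^3}\ge A\,\ell_S(\gamma)$ with $A$ close to $1$ (indeed, if the measured part of $\lambda$ is supported on a multicurve with smallest weight $w_{\min}$ and $\varepsilon_1<w_{\min}$, then $\gamma$ meets it not at all and $P$ is an honest geodesic). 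For (ii), the constraint $i(\gamma,\lambda)<I$ bounds the number of effective bends of $P$ over a period by some $n_0=n_0(I,S,\lambda)$, each of size $\le\theta_0$; hence $|\rho(\gamma)|_{\HBbb^3}\ge\ell_S(\gamma)-n_0D(\theta_0)$, and with $L:=2n_0D(\theta_0)/(1-A)$ one gets $|\rho(\gamma)|_{\HBbb^3}\ge A\,\ell_S(\gamma)$ whenever $\ell_S(\gamma)>L$.

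The step I expect to be the main obstacle is the control of \emph{concentrated} bending, which underlies the uniform quasi-geodesic constant and hence $A$. The clean estimate above presumes that the turning seen by $\gamma$ is organized into boundedly many bends of angle $<\pi$, separated by long segments; this is automatic when the measured part of $\lambda$ is a weighted multicurve, but for a general (for instance filling) bending lamination a long curve with $i(\gamma,\lambda)<I$ could in principle cross a nearly parallel band of leaves and pile up close to $\pi$ of turning along a short sub-arc, which would again permit folding. Ruling this out is where the synthetic hyperbolic geometry of $(S,\lambda)$ must be used: one must bound the turning of $P$ over every sub-arc in terms of both $i(\gamma,\lambda)$ and the $S$-length traversed, exploiting the maximality of $\lambda$ and the finiteness of its complementary triangles. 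Establishing this quantitative no-folding statement is, to my mind, the crux of the lemma.
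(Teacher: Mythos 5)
Your proposal is built on the same Thurston pleated-surface picture as the paper, but it contains two genuine gaps, one of which you flag yourself.

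First, the maximalization step rests on a misconception. You complete $\lambda_0$ by adjoining finitely many leaves \emph{carrying no transverse measure} and assert that this ``is what eventually makes the dual tree trivalent in Lemma~\ref{lem: sufficiency of energy properness}.'' That is false: the dual tree of a measured lamination depends only on the transverse measure (points of $\HBbb^2$ separated by zero transverse measure are identified in the quotient), so your $\lambda$ has $T_\lambda = T_{\lambda_0}$; if $\lambda_0$ is, say, a single closed curve, the dual tree still has vertices of valence far greater than three. As a \emph{measured} lamination your $\lambda$ simply is $\lambda_0$, and it is not maximal in the sense the lemma and its application require. The paper instead perturbs $\lambda_0$ to a genuinely maximal measured lamination $\lambda$, which necessarily changes the support of the measure and destroys the identity $i(\gamma,\lambda)=\text{(total bending of }\gamma)$ on which your entire broken-geodesic accounting depends. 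This tension is structural: your method needs the intersection number with $\lambda$ to equal the turning of $\beta(\widetilde\gamma)$, while the application needs $\lambda$ maximal as a measured lamination; these cannot hold simultaneously unless $\lambda_0$ was already maximal.

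Second, your part (ii) rests on the claim that $i(\gamma,\lambda)<I$ organizes the bending into at most $n_0(I,S,\lambda)$ bends, each of exterior angle at most $\theta_0<\pi$, and you yourself identify the failure mode: for a nonatomic (e.g.\ filling) bending lamination, a long curve can cross a band of nearly parallel leaves whose total measure is close to $\pi$ within a very short sub-arc, producing an effective fold; the ``no atom $\equiv\pi$'' normalization does not exclude this (and that normalization is itself unjustified -- Gallo--Kapovich--Marden provides existence of \emph{some} projective structure, not a family in which the bending data can be adjusted, and $2\pi$-grafting preserves atomic weights modulo $2\pi$). So what you call ``the crux'' is exactly the missing step, and additive law-of-cosines estimates cannot close it. The paper's route around it is different in kind: using maximality of $\lambda$, a compactness argument on the normalized laminations $\gamma_k/\ell_S(\gamma_k)$ shows that any simple closed curve with $i(\gamma,\lambda)<I$ and $\ell_S(\gamma)$ large must lie $C^1$-close to $\lambda$ (a lamination with zero intersection number with a maximal lamination is a sublamination of it); and for curves $C^1$-close to the lamination, folding is impossible for a geometric reason that bend-counting misses -- such curves cross leaves nearly tangentially, the pleated map is isometric along leaves, and a bending rotation about an axis one is fellow-traveling preserves forward progress along that axis (this is also why weight-$\pi$ atoms cause no trouble in the paper's argument). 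Both parts (i) and (ii) of Lemma~\ref{lem: pleating locus} are then deduced from this single $C^1$-approximation statement by contradiction, not from estimates on individual bends.
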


\begin{proof}
We begin by recalling the key property  of complex projective structures we will need.  Good references for this material, due almost entirely to Thurston, are \cite[Section 2]{kamishima-tan:deformation-geometric} and \cite[Theorem 8.6]{kulkarni-pinkall:canonical}.
Given a complex projective structure $(X, \wp)$ on $\Sigma$ with holonomy $\rho$,  
there is  a hyperbolic surface structure $S$ on $\Sigma$, a measured geodesic lamination $\lambda_0$ and a (pleated surface) map $F:\widetilde{S} \to \HBbb^3$ from the universal cover $\widetilde{S}$ to $\HBbb^3$, which 
has image a  
surface $F(\widetilde S) \subset \HBbb^3$ and for which $F\bigr|_{\tilde{\lambda_0}}$ is an isometry. Here, $\tilde{\lambda_0}$ is the lift to $\widetilde S$ of the lamination $\lambda_0 \subset S$.

Choose a point $p \in \lambda_0$ and a small neighborhood $U \subset S$ containing $p$. Some of the leaves, say $\alpha_i$, of $\lambda_0$ that meet $U$ later recur to $U$, and the images of those arcs $\alpha_i$ determine $F$-images, say $F(\widehat{U_i}) = V_i \subset \HBbb^3$, of lifts $\widehat{U_i}$ of $U$ that are separated by (fixed portions of) 
isometric images of the arcs $\alpha_i$. In particular, the images $V_i$ of those lifts are at some minimum distance $A$ from each other, depending only on the geometry of $S$ and $\lambda_0 \subset S$.

Note that if $\gamma$ is a closed curve which lies $C^1$-close to a lamination, 
we can choose such a neighborhood $U$ so that $\gamma$ meets $U$ some number $k$ times before closing up.  Thus, if 
the image $F(\tilde{q})$ of 
a lift $\tilde{q}$ of a point $q \in \gamma \cap U$ were to lie in a neighborhood $V_0 \subset \HBbb^3$, then the image $\rho(\gamma)(F(\tilde{q}))$ by the isometry $\rho(\gamma)$ of  $F(\tilde{q})$ would have to lie in some lift $V_k \subset \HBbb^3$, 
with a single lift $\hat{\gamma}$ connecting the neighborhoods $V_0$ and $V_k$ and meeting other lifts $V_1,...,V_{k-1}$ along its path. We conclude that such an isometry $\rho(\gamma)$ has translation length $\vert\rho(\gamma)\vert_{\HBbb^3}$ comparable to that of its length $\ell_S(\gamma)$ on $S$: the construction shows that this comparability constant $\vert\rho(\gamma)\vert_{\HBbb^3}/\ell_S(\gamma)$ may be taken to depend only on $\lambda_0$ and $S$, but to be independent of $\gamma$, so long as $\gamma$ is sufficiently close in $C^1$ to $\lambda_0$.

Therefore, if $\lambda_0$ is also a maximal lamination, set $\lambda = \lambda_0$ and our construction of $\lambda$ is complete.
It is of course possible that the lamination $\lambda_0$ is not maximal. For example, the lamination $\lambda_0$ might consist only of a single simple closed curve, so that the complement 
in $\Sigma$ of $\lambda_0$ could be a surface of large Euler characteristic.
In that case, we may  perturb $\lambda_0$ into a maximal lamination $\lambda$: measured 
laminations which are maximal in this sense are dense, for example by using \cite{hubbard-masur:foliations} and the density of holomorphic quadratic differentials on a Riemann surface with corresponding properties or the theory of train tracks \cite{Penner-Harer}. 
This new measured lamination $\lambda$ 
will meet the old lamination $\lambda_0$ at a maximum angle  $\delta>0$, which we may choose to be as small as we wish. In particular, the perturbation of $\lambda_0$ to $\lambda$ has only a mild effect on our constructions and estimates:  by choosing $\delta$ small enough, and restricting ourselves to curves $\gamma$ which are both very long and very close in $C^1$ to leaves in $\lambda$, we find that since $\lambda$ is close to $\lambda_0$ in $C^1$, we have already focused on curves which are sufficiently close to $\lambda_0$ in $C^1$ for the previous estimates to hold: for curve classes $\gamma$ whose $S$-geodesic representatives are sufficiently close to the $S$-measured geodesic lamination $\lambda$, we have that $\vert\rho(\gamma)\vert_{\HBbb^3}\geq A\ell_S(\gamma)$.

With these observations in mind, consider part (ii) of the lemma.
Fix a number $I>0$.
 It suffices to show that  there is a bound
  $L>0$ such that for any  simple closed 
  curve $\gamma\subset\Sigma$ with intersection number 
$i(\gamma, \lambda) < I$ and  length $\ell_S(\gamma) > L$,  the $S$-geodesic representative of $\gamma$ lies $C^1$-close to the $S$-geodesic measured lamination $\lambda$.
For suppose that it is not the case, i.e. that there is some $I$ and a sequence of curves $\gamma_k$ for which $i(\gamma_k, \lambda) < I$, while $\ell_S(\gamma_k) \to \infty$ and the $C^1$-distance between $\gamma_k$ and $\lambda$ is bounded away from zero.  
Consider the measured geodesic laminations $\mu_k$ whose measure is given, for a transverse arc $C$, by $\mu_k(C) = i(C, \gamma_k)/\ell_S(\gamma_k)$, i.e. normalized counting measure.  Of course,  as $k \to \infty$, the intersection numbers satisfy
$$i(\mu_k, \lambda) = i(\gamma_k/\ell_S(\gamma_k), \lambda) < \frac{I}{\ell_S(\gamma_k)} \to 0\ .$$  
Allowing $\mu$ to be an accumulation point of $\mu_k$, we see  that $i(\mu, \lambda) =0$. Moreover, $\mu$ is nontrivial (for example, a subsequence $\mu_k$ can all be carried on a single train track, but then one of the finitely many branches of that track admits an intersection number with a transverse arc that is bounded away from zero).  But as $\lambda$ is maximal and $i(\mu, \lambda) =0$, we have that $\mu$ is a sublamination of $\lambda$, hence the support of $\mu_k$ -- that is, the curve $\gamma_k$ -- may be taken to approximate $\lambda$ in the Hausdorff sense. This in turn implies, by the geometry of nearby hyperbolic geodesics, that $\gamma_k$ lies
arbitrarily closely to $\lambda$ in $C^1$, contradicting the assumption.

Similarly, for part (i), if no such constants $\varepsilon_1, A$ exist, we may find $\gamma_k$ for which $i(\gamma_k, \lambda)\to 0$
and $\vert\rho(\gamma_k)\vert_{\HBbb^3}/\ell_S(\gamma_k)\to 0$,
 and we derive a contradiction as above.
This completes the proof of the lemma.
\end{proof}

\section{Proof of the main result}

Let $\rho:\pi_1(\Sigma)\to\SL(2,\CBbb)$ be non-elementary. 
The theorem of Gallo-Kapovich-Marden guarantees that $\rho$ is the holonomy of a complex projective structure $(X,\wp)$ on $\Sigma$.  Let $T=T_\lambda$ be the dual tree to the measured lamination, and $S$ the hyperbolic structure on $\Sigma$, obtained  in Lemma \ref{lem: pleating locus}. Let  $N=T\times \HBbb^3$ 
and $\rho_N$ be as in Section \ref{sec:trees}.
We will need a preliminary result about $N$:
by Lemma~\ref{lem: pleating locus} (i) and eq.\ \eqref{eqn:intersection-translation}, 
we immediately have 
\begin{lemma} \label{lem:systole}
There exists $\varepsilon_2>0$, depending only on $\rho$, $S$, and $\lambda$,  such that for all $1\neq \gamma\in \pi_1(\Sigma)$, the translation length $\vert\rho_N(\gamma)\vert_N \geq \varepsilon_2$.
\end{lemma}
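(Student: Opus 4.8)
The plan is to reduce the product translation length to its two factors and then play the tree estimate against the $\HBbb^3$ estimate supplied by Lemma~\ref{lem: pleating locus}. Since $N=T\times\HBbb^3$ carries the product metric, for each $\gamma$ the squared displacement of a point $(x,y)$ separates as $d_T(\rho_T(\gamma)x,x)^2+d_{\HBbb^3}(\rho(\gamma)y,y)^2$, and the infimum over $(x,y)\in N$ splits into a sum of infima over the two factors:
\[
\vert\rho_N(\gamma)\vert_N^2=\vert\rho_T(\gamma)\vert_T^2+\vert\rho(\gamma)\vert_{\HBbb^3}^2 .
\]
In particular $\vert\rho_N(\gamma)\vert_N\geq\vert\rho_T(\gamma)\vert_T$ and $\vert\rho_N(\gamma)\vert_N\geq\vert\rho(\gamma)\vert_{\HBbb^3}$, and by \eqref{eqn:intersection-translation} the tree factor equals $i(\gamma,\lambda)$.

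Next I would dichotomize on the size of $i(\gamma,\lambda)$ relative to the constant $\varepsilon_1$ of Lemma~\ref{lem: pleating locus}. If $i(\gamma,\lambda)\geq\varepsilon_1$, the tree factor already gives $\vert\rho_N(\gamma)\vert_N\geq\vert\rho_T(\gamma)\vert_T=i(\gamma,\lambda)\geq\varepsilon_1$. If instead $i(\gamma,\lambda)<\varepsilon_1$, then Lemma~\ref{lem: pleating locus}(i) yields $\vert\rho(\gamma)\vert_{\HBbb^3}\geq A\,\ell_S(\gamma)$, and since $\ell_S(\gamma)$ is the length of a closed geodesic on the fixed hyperbolic surface $S$ it is bounded below by the systole $\mathrm{sys}(S)>0$; hence $\vert\rho_N(\gamma)\vert_N\geq\vert\rho(\gamma)\vert_{\HBbb^3}\geq A\,\mathrm{sys}(S)$. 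Setting $\varepsilon_2=\min\{\varepsilon_1,\,A\,\mathrm{sys}(S)\}$ gives the claim, and $\varepsilon_2$ visibly depends only on $\rho$, $S$, and $\lambda$.

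The one point that needs care — and the main obstacle — is that \eqref{eqn:intersection-translation} and Lemma~\ref{lem: pleating locus}(i) are recorded for \emph{simple} closed curves, whereas the systole is quantified over all $1\neq\gamma\in\pi_1(\Sigma)$. I would address this in two steps. First, positivity $i(\gamma,\lambda)>0$ for every nontrivial class holds because $\lambda$ is maximal: its complementary regions are ideal triangles, which contain no closed geodesic, so every closed geodesic crosses $\lambda$; this already rules out any $\gamma$ with $\vert\rho_N(\gamma)\vert_N=0$. Second, for the quantitative bound I would note that both inputs extend verbatim to arbitrary closed geodesics: the identity $\vert\rho_T(\gamma)\vert_T=i(\gamma,\lambda)$ for the tree dual to a measured lamination holds for all $\gamma$ once $i(\gamma,\lambda)$ is read as the total $\lambda$-mass of the geodesic representative, and the crossing-counting estimate in the proof of Lemma~\ref{lem: pleating locus} uses only that a geodesic with small $i(\gamma,\lambda)$ and large $\ell_S(\gamma)$ runs $C^1$-close to $\lambda$ — a conclusion that follows for arbitrary closed geodesics from the geodesic-current form of the intersection argument rather than the measured-lamination one.

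Alternatively, and perhaps more cleanly, I would argue by contradiction: if no such $\varepsilon_2$ existed there would be $\gamma_n\neq 1$ with $\vert\rho_N(\gamma_n)\vert_N\to 0$, forcing both $i(\gamma_n,\lambda)\to 0$ and $\vert\rho(\gamma_n)\vert_{\HBbb^3}\to 0$. If $\ell_S(\gamma_n)$ stays bounded there are only finitely many such geodesics, so some fixed $\gamma$ would satisfy $\vert\rho_N(\gamma)\vert_N=0$, contradicting the maximality-positivity above; and if $\ell_S(\gamma_n)\to\infty$ then $i(\gamma_n,\lambda)<\varepsilon_1$ for large $n$, whence $\vert\rho(\gamma_n)\vert_{\HBbb^3}\geq A\,\ell_S(\gamma_n)\to\infty$, contradicting $\vert\rho(\gamma_n)\vert_{\HBbb^3}\to 0$. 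This form isolates exactly the same obstacle — the $\HBbb^3$ estimate along possibly non-simple $\gamma_n$ — while making the roles of $\mathrm{sys}(S)>0$ and the maximality of $\lambda$ transparent.
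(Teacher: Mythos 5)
Your proof is correct and is essentially the argument the paper intends: the paper deduces this lemma ``immediately'' from Lemma~\ref{lem: pleating locus}(i) and eq.~\eqref{eqn:intersection-translation}, which is exactly your dichotomy --- if $i(\gamma,\lambda)\geq\varepsilon_1$ the tree factor of the product displacement already gives the bound, while if $i(\gamma,\lambda)<\varepsilon_1$ then $\vert\rho(\gamma)\vert_{\HBbb^3}\geq A\,\ell_S(\gamma)\geq A\cdot\mathrm{sys}(S)$, so $\varepsilon_2=\min\{\varepsilon_1,A\cdot\mathrm{sys}(S)\}$ works. Your extra care in extending the simple-closed-curve statements (the translation-length identity on the dual tree and the estimate of Lemma~\ref{lem: pleating locus}) to arbitrary nontrivial conjugacy classes addresses a point the paper passes over silently, and is a refinement of, not a departure from, the paper's proof.
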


We can now give the
\begin{proof}[Proof of Theorem~\ref{thm:harmonic}]
By  Lemma  \ref{lem: sufficiency of energy properness}, it suffices to show  that the energy function $E_{\rho_N}=E_{\rho_T} + E_{\rho}$ is proper on $\Teich(\Sigma)$.
 Let us remark that in case $\rho$ is quasi-Fuchsian, it was shown in \cite[Section 5]{GoldmanWentworth:07}
 (see also \cite[Prop.\ 3.6]{wolf:mfs-harmonic}) that $E_\rho$ is proper, and therefore so is $E_{\rho_N}$ for any choice of  $T$.  
 For general $\rho$, however, properties of the lamination $\lambda$ and the associated tree $T=T_\lambda$ play a key role, and 
 the argument is necessarily different from the one used in \cite[Section 5]{GoldmanWentworth:07}.
 With the intent of  arriving at a contradiction, we therefore suppose to the contrary that $E_{\rho_N}$ is not proper. 
 Under the assumption  we can find a sequence  $\sigma_i : \pi_1(\Sigma)\isorightarrow \Gamma_i\subset \PSL(2,\RBbb)$  of Fuchsian representations such that the set of isomorphism classes of marked Riemann surfaces $\{X_i\}_{i\in \NBbb}$,  $X_i=\Gamma_i\backslash \HBbb^2$, contains no limit points in $\Teich(\Sigma)$.  
We suppose furthermore that we have a constant $K$ and unique harmonic maps
$$u_i :\HBbb^2\lra T\ , \ v_i: \HBbb^2\to \HBbb^3$$
that are equivariant with respect to the action of $\pi_1(\Sigma)$, via $\sigma_i$ on the left, and $\rho_T$ and $\rho$ on the right, with $E(u_i)+E(v_i)\leq K$.

\smallskip\noindent{\bf Step 1.}
By a standard argument (see \cite{schoen-yau:incompressible, SacksUhlenbeck:82}), the energy bound plus Lemma \ref{lem:systole} imply that there is a uniform positive lower bound on the lengths of the shortest geodesics for the hyperbolic surfaces $X_i$. By the Mumford-Mahler compactness theorem, it follows that we can find quasiconformal homeomorphisms $g_i:\HBbb^2\to \HBbb^2$ and a Fuchsian representation $\sigma_\infty:\pi_1(S)\isorightarrow \Gamma_\infty$,  such that $g_i\circ\Gamma_i\circ g_i^{-1}=\Gamma_i$, and (after passing to a subsequence) $\hat\sigma_i=g_i\circ\sigma_i \circ g_i^{-1}\to \sigma_\infty$, in the Chabauty topology.   Introduce the following notation: for any $\gamma\in\pi_1(\Sigma)$, define
\begin{equation} \label{eqn:gamma-hat}
\hat\gamma_i:=\sigma_i^{-1}\circ\hat\sigma_i(\gamma)\ .
\end{equation}

\smallskip\noindent{\bf Step 2.}    
Let us first focus on the maps $u_i$ to the tree. By  \cite{KorevaarSchoen:1993} and the  convergence of the $\hat\sigma_i$, the 
maps $u_i$ are uniformly Lipschitz with a constant proportional to $\sqrt{E(u_i)}$.  In particular, since the energy is uniformly bounded, so is the Lipschitz constant.
Therefore, we may assume the Hopf differentials $\Phi_i$ of $u_i$, regarded as $\Gamma_i$-automorphic holomorphic quadratic differentials on $\HBbb^2$, converge $\Phi_i\to \Phi_\infty$ uniformly to a holomorphic differential $\Phi_\infty$. 
It is possible that $\Phi_\infty\equiv 0$; we will deal with this contingency in Step 6 below. In the intervening steps below, assume $\Phi_\infty \not\equiv 0$.

\smallskip\noindent{\bf Step 3.} As discussed previously, the leaf space $T_{\Phi_i}$ of the  vertical measured foliation of $\Phi_i$ has the structure of an $\RBbb$-tree  with an isometric action of $\pi_1(\Sigma)$ (via $\hat\sigma_i$) that is $\pi_1(\Sigma)$-equivariantly isometric to $T$. Denote this isometry by
$
\psi_i : T_{\Phi_i}\lra T
$. If we let $\pi_i : \HBbb^2\to T_{\Phi_i}$ be the projection onto the leaf space of the vertical foliation, 
then as in Section  \ref{sec:trees}  we have that  $u_i$ is given by $u_i = \psi_i\circ \pi_i$.

\smallskip\noindent{\bf Step 4.} Fix $\gamma\in\pi_1(\Sigma)$. We choose a representative curve $\alpha_\infty$ in $\HBbb^2$ from $0$ to $\sigma_\infty(\gamma)\cdot 0$ that is quasitransverse  to the vertical measured foliation of $\Phi_\infty$.  Let $\alpha_i:[0,1]\to \HBbb^2$  be a  path  
from $0$ to $\hat\sigma_i(\gamma)\cdot 0$, that is 
quasitransverse to the vertical foliation of
$\Phi_i$.  
 Then since the $\hat\sigma_i$ and $\Phi_i$ converge, $\alpha_i$  may furthermore be chosen $\varepsilon$-close 
 to  $\alpha_\infty$ 
 for $i$ sufficiently large.

\smallskip\noindent{\bf Step 5.}  By Step 4,  it follows that there is $I$  (depending on $\gamma$) such that  for $i$ sufficiently large,
$$
d_{T_{\Phi_i}}\left(\pi_i\alpha_i(1), \pi_i \alpha_i(0)\right)
 < I\ .
$$
On the other hand,
\begin{align*}
d_{T_{\Phi_i}}\left(\pi_i\alpha_i(1), \pi_i \alpha_i(0)\right)&=
d_{T}\left(\psi_i\circ\pi_i\alpha_i(1), \psi_i\circ\pi_i \alpha_i(0)\right) \\
&
=d_{T}\left(u_i(\hat\sigma_i(\gamma)\alpha_i(0)), u_i (\alpha_i(0))\right) \\
&
=d_{T}\left(u_i(\sigma_i(\hat\gamma_i)\alpha_i(0)), u_i (\alpha_i(0))\right) \\
&
=d_T(\rho_T(\hat\gamma_i)u_i(\alpha_i(0)), u_i(\alpha_i(0))    \ ,
\end{align*}
where  $\hat\gamma_i$ is defined by \eqref{eqn:gamma-hat}.
Hence, in particular, 
\begin{equation} \label{eqn:dt}
i(\hat\gamma_i, \lambda)=|\rho_T(\hat\gamma_i)|_T < I\ ,
\end{equation} 
 for $i$ sufficiently large.
 
 \smallskip\noindent{\bf Step 6.}  In the case where $\Phi_\infty\equiv 0$, it follows from \eqref{eqn:energy} that $E(u_i)\to 0$. Hence, by the assertion in Step 2, the Lipschitz constants for $u_i$ also tend to zero uniformly. Therefore, for any given $\gamma\in \pi_1(\Sigma)$, since $\hat\sigma_i(\gamma)\cdot 0\to \sigma_\infty(\gamma)\cdot 0$ remains bounded,
\begin{align*}
|\rho_T(\hat\gamma_i)|_T &\leq d_T(\rho_T(\hat\gamma_i) u_i(0), u_i(0)) \\
&=d_T(u_i(\sigma_i(\hat\gamma_i)\cdot 0), u_i(0)) \\
&=d_T(u_i(\hat\sigma_i(\gamma)\cdot 0), u_i(0)) \\
&\to 0%
\end{align*}
by the decay of the Lipschitz constants for $u_i$ and the convergence of $\hat\sigma_i(\gamma)\cdot 0$. Thus $|\rho_T(\hat\gamma_i)|_T <I$ for $i$ sufficiently large so that \eqref{eqn:dt}  holds in this case as well.

\smallskip\noindent{\bf Step 7.}  We apply a similar argument to the sequence of harmonic maps $v_i$.
Since the energy $E(v_i)$ is uniformly bounded,
and the groups  $g_i\circ\Gamma_i\circ g_i^{-1}$ converge, 
 the $v_i$ are uniformly Lipschitz.
  In particular,  for any $\gamma\in \pi_1(\Sigma)$ there is $B$ (depending on $\gamma$), such that
$$
d_{\HBbb^3}(v_i(\hat\sigma_i(\gamma)\cdot 0), v_i( 0)) \leq B\ .
$$
In that case,
\begin{align}
d_{\HBbb^3}(v_i(\hat\sigma_i(\gamma)\cdot 0), v_i( 0)) &=
d_{\HBbb^3}(v_i(\sigma_i(\hat\gamma_i)\cdot 0), v_i( 0))   \notag  \\
&=d_{\HBbb^3}(\rho(\hat\gamma_i) v_i( 0), v_i( 0))  \notag \\
\Longrightarrow \qquad \vert\rho(\hat\gamma_i)\vert_{\HBbb^3}&\leq B\ . \label{eqn:dh}
\end{align}
Of course, in this last term, the quantity $B$ still depends on $\gamma$ but is bounded independently of the index $i$.

\smallskip\noindent{\bf Step 8.}
We now relate the estimates of the previous three steps to arrive at  the following crucial conclusion. 
Combining eqs.\ \eqref{eqn:dt} and \eqref{eqn:dh} with Lemma \ref{lem: pleating locus}, we find that  the lengths $\ell_S(\hat\gamma_i)$ must be uniformly bounded in $i$. This implies that there are only finitely many homotopy classes among the $\hat\gamma_i$. Hence, after passing to a subsequence we may assume there exists a fixed $\hat\gamma$ such   that $\hat\gamma_i=\hat\gamma$, for  all  $i$.

\smallskip\noindent{\bf Step 9.}
Now apply the argument in Steps 4-8 to a set of generators $\gamma^{(1)}, \ldots, \gamma^{(2g)}$ of $\pi_1(\Sigma)$.  We conclude that along some subsequence, 
$$
\hat\gamma^{(j)}=\sigma_i^{-1}\circ\hat\sigma_i(\gamma^{(j)})\ , \ j=1, \ldots, 2g
$$
 (see \eqref{eqn:gamma-hat}).
But then the automorphisms $\sigma_i^{-1}\circ\hat\sigma_i$ are constant on all of $\pi_1(\Sigma)$. Since $\hat\sigma_i$ converges, so does $\sigma_i$, contradicting the hypothesis of no limit points for the $X_i$'s.

This   contradiction completes the proof.
\end{proof}
%

%
%

\nocite{} 
\bibliographystyle{alpha}
\bibliography{spectral}

\end{document}


That last convention implies that there is a sequence of constants $a_n < a <\infty$, uniformly bounded by a constant $a$, and simple closed curves $\gamma_n$ so that for a simple closed curve $\alpha \subset \Sigma$, we have that  $|i(a_n\gamma_n, \alpha) - \ell_{X_n}(\alpha)| = O(1)$: here $i(\cdot, \cdot)$ is the intersection number on the surface $\Sigma$ and $\ell_{X_n}(\cdot)$ is the $X_n$-length. In particular, $a_n \ell_{X_n}(\gamma_n) = O(1)$.




The main new feature is the

Lemma:  Let S be a surface and let \rho be a nonelementary 
representation of the surface group \pi_1(S) into PSL(2,C) which lifts 
to provide a complex projective structure on S.  Then there is a maximal 
measured lamination \lambda on S, an \epsilon > 0, and a constant A>0 so 
that if \gamma is a simple closed curve on S with intersection number 
i(\gamma, \lambda) < \epsilon, then ||\rho(\gamma)|| >A.

Proof:  Choose a complex projective structure on S with holonomy \rho. 
Then there is a locally defined map F:S \to \HBbb^3 from S to \HBbb^3, which 
has image a pleated surface in \HBbb^3: I believe this is Theorem 8.6 of 
Kulkarni-Pinkall (attached).  Let \lambda_0 be the pleating lamination 
for this surface.

It is possible that the lamination \lambda_0 is not maximal: for example 
it might only have a single simple closed curve, so that the complement 
in S of \lambda_0 could be a surface of large Euler characteristic.  We 
then perturb \lambda_0 into a maximal lamination \lambda (i.e. a 
measured lamination all of whose complementary regions are ideal 
triangles) as well as minimal (so that the leaves are dense): measured 
laminations which are both maximal and minimal in this sense are dense 
[maybe -- need a reference....Shinpei did not asert the minimality, but 
I think it might be useful...?].  This new measured lamination lambda 
will meet the old lamination \lambda_0 at a maximum angle of \delta, 
which we may choose to be as small as we wish.

Suppose that the condition in the statement of the lemma is not true.  
Then for every n > 0 and A>0, there is a curve \gamma_n with i(\gamma_n, 
\lambda) < 1/n, while \ell(\gamma_n) < A. (Here \ell(\gamma) refers to 
the length of \gamma in \HBbb^3.) But for n sufficiently large, we must 
have that the S-length of the curve \gamma_n is arbitrarily large; here 
the S-length of the curve \gamma_n is the length measured with respect 
to a hyperbolic length on the underlying complex structure of the 
projective structure. Thus this curve \gamma_n must make an angle with 
\lambda which is at most \delta, and hence make an angle with \lambda_0 
of at most 2\delta.   Thus there are portions of \gamma_n which lie near 
to \lambda_0 in T^1\HBbb^3.  But because \lambda_0 is a geodesic in \HBbb^3, 
orbits of a point on \lambda_0 under \rho are taken a minimum distance 
D_0 by \rho.  Thus \gamma_n must come close to two such orbit points, 
and hence have a translation length of at least D_0/2.  This contradicts 
the choice of \gamma_n for A< D_0/2, as required, concluding the proof.

.................................

Now fix an SL(2,\C) representation \rho. Let T=T(\rho) be the dual tree 
to the measured lamination \lambda= \lambda(\rho) from the lemma, 
equipped with the natural action of \pi_1(S) on T: we might as well 
imagine that this action is defined via a map \sigma: \pi_1(S) \to 
Isom(T). We retain the notation A for the constant guaranteed by that lemma.

(*) Consider a curve \gamma on S for which \sigma(\gamma) has small 
translation length on T.  Then \gamma lifts to an curve on S which has 
small intersection number with \lambda.  The lemma then forces that 
||\rho(\gamma)|| > A.

For each hyperbolic metric g on S, we consider the (equivariant) energy 
E(g) of the twisted harmonic map of the universal cover (\widetilde{S}, 
\tilde{g}) into the product \HBbb^3 x T, where the equivariance respects 
the representations (\rho, \sigma). In particular, let g_k denote a 
sequence of hyperbolic metrics which leave all compact sets in 
Teich(S).  Naturally this means that there is a sequence of simple 
closed curves \gamma_k for which \ell_{g_k}(\gamma_k) -> 0.  By the 
conclusion (*) from the previous paragraph, the minimum (equivariant) 
length of image of \gamma_k in \HBbb^3 x T is at least some constant B. 
Thus E(g_k) \to \infty.

We conclude that the energy function E: Teich(S) \to \R given by g 
\mapsto E(g) is a proper function.  Thus there is a structure g_0 \in 
Teich(S) for which dE(g_0) vanishes, and hence the Hopf differential of 
the equivariant harmonic map vanishes.  Thus the Hopf differential of 
the equivariant harmonic map from (\widetilde{S}, \tilde{g_0}) to \HBbb^3 is 
the negative of the Hopf differential from (\widetilde{S}, \tilde{g}) to T: 
because we chose T to be dual to a lamination with all complementary 
regions being ideal triangle, all of the vertices of that differential 
have valence three, as desired.